\newtheorem{thm}{Theorem}[section]
\newtheorem{lem}[thm]{Lemma}
\newtheorem{prop}[thm]{Proposition}
\theoremstyle{definition}
\newtheorem{defn}[thm]{Definition}
\theoremstyle{remark}
\newtheorem{rem}[thm]{Remark}
\numberwithin{equation}{section}
\newcommand{\R}{\mathbb R}
\newcommand{\be}{\begin{equation}}
\newcommand{\ee}{\end{equation}}
\newcommand{\eps}{\epsilon}
\newcommand{\p}{\partial}
\begin{document}

\title[Lipschitz regularity]{Lipschitz regularity of solutions to two-phase free boundary problems}

\author{D. De Silva}
\address{Department of Mathematics, Barnard College, Columbia University, New York, NY 10027, USA}
\email{\tt  desilva@math.columbia.edu}
\author{O. Savin}
\address{Department of Mathematics, Columbia University, New York, NY 10027, USA}\email{\tt  savin@math.columbia.edu}
\thanks{O.~S.~is supported by  NSF grant DMS-1200701.}

\begin{abstract}We prove Lipschitz continuity of viscosity solutions to a class of two-phase free boundary problems governed by fully nonlinear operators. \end{abstract}

\maketitle
\section{Introduction}

Consider the two-phase free boundary problem,
\begin{equation}
\left\{
\begin{array}{ll}
\mathcal{F}(D^{2}u)=0, & \hbox{in $B_1^+(u) \cup B_1^-(u),$} \\
\  &  \\
u_{\nu }^{+}=G(u_{\nu }^{-}), &
\hbox{on $F(u):= \partial
B^+(u) \cap B_1.$} \\
&
\end{array}
\right.  \label{fb}
\end{equation}%
Here $B_r \subset \mathbb{R}^{n}$ denotes the ball of radius $r$ centered at $0$ and 
\begin{equation*}
B_1^{+}(u):=\{x\in B_1 :u(x)>0\},\quad B_1^{-}(u):=\{x\in B_1
:u(x)\leq 0\}^{\circ },
\end{equation*}%
while $u_{\nu }^{+}$ and $u_{\nu }^{-}$ denote the normal derivatives in the
inward direction to $B_1^{+}(u)$ and $B_1^{-}(u)$ respectively. $F(u)$ is the so-called \emph{free boundary} of $u$.
$\mathcal{F}$ is a fully nonlinear uniformly elliptic operator and
the function $G: \R^+  \to \R^+$ is $C^2$ and it satisfies the usual ellipticity assumption
\be\label{G_e}\mbox{ $G(t)$ is strictly increasing} \quad \text{and $G(t) \to \infty$ as $t \to \infty$.}\ee

Our main result gives the Lipschitz continuity of a viscosity solution $u$ to \eqref{fb} under the assumption that $G(t)$ behaves like $t$ for all $t$ large. Precisely, we require the following:
\begin{equation}\label{G}
G'(t) \to 1, \quad G''(t) =O\left(\frac 1 t\right),  \quad \text{as $t \to \infty$.}
\ee
This clearly includes the case $G(t)=\sqrt{1+t^2}$, which arises in several models. 


\begin{thm}\label{TLip}
Let $u$ be a viscosity solution to \eqref{fb}-\eqref{G_e} and assume that \eqref{G} holds. Then
$$\|\nabla u\|_{L^\infty(B_{1/2})} \le C\left(n, \lambda,\Lambda, G  \right)(\|u\|_{L^\infty(B_1)}+1).$$
\end{thm}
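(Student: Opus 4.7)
The plan is a contradiction-and-compactness argument. Assume the estimate fails: there exist viscosity solutions $u_k$ of \eqref{fb}--\eqref{G_e} with $\|u_k\|_{L^\infty(B_1)}\le 1$ and points $x_k\in B_{1/2}$ at which $M_k := |\nabla u_k(x_k)| \to \infty$. Interior $C^{1,\alpha}$ estimates for $\mathcal{F}(D^2 \cdot)=0$ in each phase force $d_k := \mathrm{dist}(x_k, F(u_k)) = O(1/M_k)$, and a Savin-type iterative point-selection lets us further assume $|\nabla u_k|\le C M_k$ on a ball of radius $R d_k$ around $x_k$ for any prescribed $R$. Pick $y_k \in F(u_k)$ with $|x_k-y_k|=d_k$.

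Rescale by $v_k(x):=u_k(y_k+d_kx)/(M_kd_k)$. Then $v_k$ satisfies a rescaled fully nonlinear equation in each phase (whose normalizations converge to the recession operator $\mathcal{F}^\ast$ of $\mathcal{F}$), together with the transformed free-boundary condition $(v_k)_\nu^+ = G_k((v_k)_\nu^-)$ where $G_k(t):=M_k^{-1}G(M_kt)$. Assumption \eqref{G} gives $G_k \to \mathrm{id}$ in $C^1_{\mathrm{loc}}((0,\infty))$; moreover $v_k(0)=0$, $\|\nabla v_k\|_{L^\infty}\le C$ on the expanding domain of definition, and $|\nabla v_k(\xi_k)|=1$ for $\xi_k:=(x_k-y_k)/d_k\in\partial B_1$. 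Using uniform Lipschitz bounds and the stability of viscosity solutions under perturbations of $\mathcal{F}$ and $G$, extract a subsequence $v_k\to v_\infty$ locally uniformly in $\R^n$, with Hausdorff convergence of the free boundaries. The limit $v_\infty$ is a global viscosity solution of $\mathcal{F}^\ast(D^2 v_\infty)=0$ in each phase together with the \emph{linear} transmission $(v_\infty)_\nu^+=(v_\infty)_\nu^-$ on $F(v_\infty)$.

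The matching of normal derivatives forces $v_\infty\in C^1(\R^n)$ to be a global viscosity solution of $\mathcal{F}^\ast(D^2v_\infty)=0$ on all of $\R^n$, with bounded gradient. Caffarelli's Liouville theorem for fully nonlinear uniformly elliptic operators then yields that $v_\infty(x)=a\cdot x$ is affine, with $|a|=1$ by the normalization $|\nabla v_\infty(\xi_\infty)|=1$.

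It remains to extract a contradiction at the original scale. The affineness of the limit means $v_k$ is $\varepsilon_k$-flat on any fixed ball around the origin with $\varepsilon_k\to 0$; equivalently $u_k$ is $\varepsilon_k$-flat with a planar slope of size $M_k$ at scale $d_k$ around $y_k$, and $F(u_k)$ is $\varepsilon_k$-flat there. Combined with $G_k$ being nearly the identity on the relevant range of normal derivatives, a two-phase improvement-of-flatness iteration for \eqref{fb} (in the spirit of De Silva's partial-Harnack approach) then transfers flatness across dyadic scales and forces $|\nabla u_k(x_k)|\le C(n,\lambda,\Lambda,G)$ for large $k$, contradicting $M_k\to\infty$. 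The main obstacle is precisely this last improvement-of-flatness step in a two-phase setting with a $k$-dependent nonlinearity $G_k$; it is here that the quantitative rate $G''(t)=O(1/t)$ in \eqref{G} plays its decisive role, providing the uniform control of the error terms that the mere qualitative $G'(t)\to 1$ does not supply.
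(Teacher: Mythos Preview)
Your argument shares the two essential ingredients with the paper's proof: (a) the observation that once the rescaled free-boundary conditions $G_k$ converge to the identity, the limit $v_\infty$ solves the full equation $\mathcal F^\ast(D^2v_\infty)=0$ across the zero set (this is Lemma~\ref{compact} in the paper, and requires a nontrivial barrier argument you only assert); and (b) the appeal to the two-phase flatness/improvement-of-flatness machinery of \cite{DFS}. Where your proposal diverges is in the packaging: you perform a single blow-up at the scale $d_k=\mathrm{dist}(x_k,F(u_k))$ and invoke a Liouville theorem, whereas the paper runs a fixed-scale dichotomy (Proposition~\ref{main}) and then iterates it from the outside in.

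The gap is in your final paragraph. After Liouville you obtain $v_\infty(x)=a\cdot x$ with $|a|=1$; equivalently, $u_k$ is $\varepsilon_k$-flat at scale $d_k$ around $y_k$ with slope $\sim M_k$. You then claim that an improvement-of-flatness iteration ``forces $|\nabla u_k(x_k)|\le C$''. But improvement of flatness (Lemma~5.1 of \cite{DFS}) iterates \emph{inward}: it propagates flatness from $B_1$ to $B_r$, with the slope $\beta'$ staying close to $\beta$. Applied to $v_k$ it yields $C^{1,\gamma}$ regularity of $F(v_k)$ near $0$ and $|\nabla v_k|\sim 1$ there, i.e.\ $|\nabla u_k|\sim M_k$ near $y_k$ --- entirely consistent with $M_k\to\infty$. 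There is no mechanism in your argument that transfers information \emph{outward} from scale $d_k$ to scale $1$ so as to collide with the global bound $\|u_k\|_{L^\infty(B_1)}\le 1$. Indeed, the bound $\|u_k\|_{L^\infty}\le 1$ together with $v_\infty=a\cdot x$, $|a|=1$, only forces $M_kd_k\to 0$, which is no contradiction.

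The paper circumvents this by never blowing up at the bad point. Instead it normalizes $\tilde u_k=u_k/C_k$ at the \emph{original} scale, passes to a limit $u^*$ on $B_1$, uses interior $C^{1,\alpha}$ (not Liouville) to write $u^*\approx a\cdot x$ on $B_{2\delta}$, and then splits: if $|a|\le\tfrac14$ one gets oscillation decay (alternative~(ii)); if $|a|>\tfrac14$ the flatness theorem gives a Lipschitz bound directly (alternative~(i)). Iterating this dichotomy over dyadic scales $\delta^k$ controls $r^{-1}\|u\|_{L^\infty(B_r)}$ for all $r$, which is exactly the missing link between the local picture and the global $L^\infty$ bound.
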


The dependence on $G$ in the constant above is determined by the rate of convergence in the limit \eqref{G}. We remark that  \eqref{G} can be relaxed to $G' \in [1-\delta, 1+\delta]$ for large values of $t$ (see Section 3.)
If $\mathcal F$ is homogeneous of degree 1, then it suffices to require that $G'$ is sufficiently close to a constant as $t \to\infty$. 

The heuristic behind Theorem \ref{TLip} is that in the regime of ``big gradients" the free boundary condition becomes a continuity (no-jump) condition for the gradient. Then, gradient estimates follow from interior $C^{1,\alpha}$ estimates for fully nonlinear elliptic equations.

The study of two-phase free boundary problems for Laplace's equation was initiated by Alt, Caffarelli and Friedman in \cite{ACF} with variational techniques. The viscosity approach was later developed by Caffarelli in the pioneer works \cite{C1, C2, C3}. 
One central question is the optimal regularity of a solution $u$.
In one-phase problems, when $u$ is restricted to be non-negative, the Lipschitz regularity of the solution is an almost straightforward consequence of the free boundary condition. However, in the two-phase case, an ad-hoc monotonicity formula was introduced in \cite{ACF} to establish Lipschitz continuity and to identify so called- blow-up limits.
Variants of this formula have been obtained for example by Caffarelli, Jerison and Kenig in \cite{CJK} and by Matevosyan and Petrosyan in \cite{MP}, with applications to two-phase free boundary problems. In a recent paper \cite{DK}, Dipierro and Karakhanyan proved Lipschitz continuity of variational solutions of a two-phase free boundary problem governed by the $p$-Laplacian for a special class of isotropic free boundary conditions, without relying on the monotonicity formula.

On the other hand, in the case when the problem has a non-variational structure no known techniques are available to prove the Lipschitz continuity of the solution and analyze blow-up limits. In \cite{CDS}, we examined this question and obtained Theorem \ref{TLip} in $2D$, in the most general case when $u$ solves two different elliptic equations in the two phases and for any elliptic free boundary condition $G$. The arguments in \cite{CDS} are however purely two-dimensional. 

Here we are concerned with the question of Lipschitz continuity of viscosity solutions to general two-phase problems in any dimension. In a forthcoming paper, we will analyze the question of the classification of global Lipschitz solutions. 

This note is organized as follows. In Section 2 we present the proof of Theorem \ref{TLip}. Section 3 provides the statement of some known flatness results, which are needed in the proof of our main Theorem. It also contains some applications and extensions of our main result.

\section{The proof of Theorem \ref{TLip}}

We introduce the definition of viscosity solution to our free boundary problem,
\begin{equation}
\left\{
\begin{array}{ll}
\mathcal{F}(D^{2}u)=0, & \hbox{in $B_1^+(u) \cup B_1^-(u),$} \\
\  &  \\
u_{\nu }^{+}=G(u_{\nu }^{-}), &
\hbox{on $F(u)$.} \\
&
\end{array}
\right.  \label{fb2}
\end{equation}%
$\mathcal F$ is a uniformly elliptic operator, that is 
there exist $0<\lambda\leq \Lambda$ such that for every  $M, N\in \mathcal{S}^{n\times n},$  with  $N\geq 0,$
$$
\lambda\| N\|\leq \mathcal{F}(M+N)-\mathcal{F}(M)\leq \Lambda \| N\|,
$$
where $\mathcal{S}^{n\times n}$ denotes the set of real  $n\times n$ symmetric matrices. We write $N \geq 0,$ whenever $N$ is non-negative definite and we denote by $\|M\|= \sup_{|x|=1} |Mx|$. Finally, we assume that  $\mathcal{F}(0)=0.$ The class of all such operators is denoted by $\mathcal E(\lambda, \Lambda).$

We start with some standard notion.
Given $u, \varphi \in C(B_1)$, we say that $\varphi$
touches $u$ by below (resp. above) at $x_0 \in B_1$ if $u(x_0)=
\varphi(x_0),$ and
$$u(x) \geq \varphi(x) \quad (\text{resp. $u(x) \leq
\varphi(x)$}) \quad \text{in a neighborhood $O$ of $x_0$.}$$ If
this inequality is strict in $O \setminus \{x_0\}$, we say that
$\varphi$ touches $u$ strictly by below (resp. above).

Let $\mathcal F \in \mathcal E(\lambda, \Lambda)$. If $v \in C^2(O)$, $O$ open subset in $\R^n,$ satisfies  $$\mathcal F(D^2 v) > 0  \  \ \ (\text{resp}. <0)\quad \text{in $O$,}$$ we call $v$ a (strict) classical subsolution (resp. supersolution) to the equation $\mathcal F(D^2 v) = 0 $ in $O$.

We recall that $u \in C(O)$ is a viscosity solution to $$\mathcal F(D^2 v) = 0  \quad \text{in $O$,}$$ if $u$ cannot be touched by  above (resp. below) by a strict classical subsolution (resp. supersolution) at an interior point $x_0 \in O.$

We now turn to the free boundary condition. We point out that our Theorem holds if we require the free boundary condition to be satisfied only when $u_\nu^-  \neq 0$ is large.  The precise definition is the following.

\begin{defn}\label{def} We say that  $u$ satisfies the free boundary condition 
$$ u_\nu^+=G(u_\nu^-), $$
at a point $y_0 \in F(u)$ if for any unit vector $\nu$, there exists no function $\psi \in C^2 $ defined in a neighborhood of $y_0$ with $\psi(y_0)=0$, $\nabla \psi(y_0)=\nu$ such that either of the following holds:

(1) $a \psi^+ - b \psi^- \le u$ with  $a>0$, $b > 0$  and $a> G(b)$ (i.e. $u$ is a supersolution);

(2) $a \psi^+ - b \psi^- \ge u$ with $a>0$, $b > 0$ and $a < G(b)$ (i.e. $u$ is a subsolution).
\end{defn}

We only use comparison functions which cross the $0$ level set transversally and therefore have a nontrivial negative part. For this reason the free boundary condition is preserved when taking uniform limits. It is straightforward to check that a uniform limit of solutions of \eqref{fb2} satisfies \eqref{fb2} as well.

Our first preliminary result gives the H\"older continuity of viscosity solutions. It holds in fact for an even more general class of problems. 

From now on, constants that depend only on $n, \lambda, \Lambda,$ will be called universal and dependence on such parameters will not be specified.

\begin{thm}\label{holder} Let $u$ be a viscosity solution to  
\begin{equation}
\left\{
\begin{array}{ll}
a_{ij}u_{ij}=0, & \hbox{in $B_1^+(u) \cup B_1^-(u),$} \\
\  &  \\
u_{\nu }^{+}=G(u_{\nu }^{-}), &
\hbox{on $F(u),$}\\
&
\end{array}
\right.  \label{fbmeas}
\end{equation} with $a_{ij} \in C(B_1),$
 and assume that for $\sigma>0$
\be\label{G2} \sigma^{-1}t \geq G(t) \geq \sigma t, \quad \text{for $t>M$}. \ee
Then $u \in C^{0,\alpha}(B_{1/2})$ for some $\alpha>0$ depending on $\sigma$, and $$\|u\|_{C^{0,\alpha}(B_{1/2})} \leq C(\sigma,M)( \|u\|_{L^\infty(B_1)}+1).$$
\end{thm}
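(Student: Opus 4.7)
The plan is to reduce H\"older continuity to an oscillation decay estimate at free boundary points and combine it with interior Krylov--Safonov estimates in each phase. The starting observation is that both $u^+$ and $u^-$ are viscosity subsolutions of the linear equation $a_{ij}v_{ij}=0$ in all of $B_1$: if $\varphi\in C^2$ touches $u^+$ from above at some $y_0$, then either $y_0\in\{u>0\}$, where the subsolution property comes from the equation for $u$, or $y_0$ lies in the closure of $\{u\leq 0\}$ (in particular on $F(u)$), in which case $\varphi(y_0)=u^+(y_0)=0$ and $\varphi\geq u^+\geq 0$ force $\nabla\varphi(y_0)=0$ and $D^2\varphi(y_0)\geq 0$, so that $a_{ij}(y_0)\varphi_{ij}(y_0)\geq 0$ by ellipticity. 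The symmetric argument handles $u^-$. This unlocks the standard De Giorgi growth lemma: any nonnegative viscosity subsolution $v$ of $a_{ij}v_{ij}=0$ with $v\leq M$ in $B_1$ and $|\{v=0\}\cap B_1|\geq \delta|B_1|$ satisfies $\sup_{B_{1/2}}v\leq(1-\eta(\delta))M$.

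The core step is then to prove the oscillation decay at any $y_0\in F(u)\cap B_{1/2}$ and any small $r$,
\begin{equation*}
\mathrm{osc}_{B_{r/2}(y_0)}u \leq (1-\eta)\,\mathrm{osc}_{B_r(y_0)}u + Cr,
\end{equation*}
with universal $\eta,C>0$, where the additive $Cr$ absorbs the cutoff ``for $t>M$'' in \eqref{G2} after rescaling. Translating $y_0$ to $0$, rescaling to the unit ball, and setting $M_+=\sup_{B_1}u^+$, $M_-=\sup_{B_1}u^-$ with $M_++M_-=1$, I would split into cases by the densities of $\{u\leq 0\}$ and $\{u\geq 0\}$ in $B_1$. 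In the balanced case, when both densities exceed some universal $\delta_0$, the growth lemma applied to $u^+$ (zero set $\{u\leq 0\}$) and to $u^-$ (zero set $\{u\geq 0\}$) directly gives $\sup_{B_{1/2}}u^\pm\leq(1-\eta)M_\pm$, hence the decay. In the thin positive-phase case $|\{u\geq 0\}\cap B_1|<\delta_0|B_1|$, the growth lemma still yields $\sup_{B_{1/2}}u^+\leq(1-\eta)M_+$; if moreover $M_+\geq c_0$ this already suffices since $\mathrm{osc}_{B_{1/2}}u\leq(1-\eta)M_++M_-=1-\eta M_+$. When $M_+$ is much smaller than $M_-$, $u$ is essentially concentrated in the negative phase, and I would invoke the free boundary condition through a two-plane comparison function $a\psi^+-b\psi^-$ with $b$ of order $M_-$ and $a<G(b)$, slid against $u$ from above until it touches at a point of $F(u)$; by the viscosity subsolution definition this touching forces $a\geq G(b)\geq\sigma b$ (for $b>M$), i.e.\ $u^+$ has a definite slope $\gtrsim\sigma M_-$ into the positive phase near that point, producing positive values far larger than $M_+$ and giving the desired contradiction. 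The symmetric thin-negative-phase sub-case is handled analogously.

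Iterating the oscillation decay dyadically at each $y_0\in F(u)\cap B_{1/2}$ yields $\mathrm{osc}_{B_r(y_0)}u\leq Cr^\alpha(\|u\|_{L^\infty}+1)$, and combining with interior Krylov--Safonov H\"older estimates on balls $B_{d/2}(x)$ of radius $\tfrac12\mathrm{dist}(x,F(u))$ provides the global $C^{0,\alpha}$ bound on $B_{1/2}$. The main obstacle will be the last sub-case of the oscillation decay: constructing a two-plane barrier admissible for the viscosity FB definition, ensuring that the touching happens at a genuine free boundary point of $u$ rather than at an interior point of the thin positive phase, and choosing the rescaling so that one actually sits in the regime $b>M$ of \eqref{G2} (which is what generates the additive $Cr$), is the technical heart of the argument. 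The balanced case and the iteration are organisational.
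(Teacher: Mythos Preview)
Your strategy is the paper's: both exploit that $u^\pm$ are viscosity subsolutions of the linear equation in all of $B_1$ to run a weak-Harnack/growth-lemma dichotomy, invoke the free boundary condition via a barrier of the form $a\psi^+ - b\psi^-$ in the unbalanced case, iterate for the $C^{0,\alpha}$ decay, and patch with interior Krylov--Safonov estimates. The only organizational difference is that the paper does not prove a single-step oscillation decay; it iterates the growth-lemma alternative $\bar k$ times at dyadic scales until one phase (say $u^-$) is genuinely tiny, $u^-\le\epsilon_0$ in $B_{2^{-\bar k}}$, and only then runs the barrier argument once. This is equivalent to your dichotomy once your threshold $c_0$ is taken to be this $\epsilon_0$.

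On the barrier step you should sharpen two things. First, $\psi$ cannot be affine: a literal two-plane function solves the equation in each phase, so sliding it could produce an interior touching point and your argument would not localize the contact to $F(u)$. The paper takes $\psi(x)=c(|x-x_0|^{-\gamma}-d^{-\gamma})$, a \emph{strict} classical subsolution for large $\gamma$, centered at a point $x_0$ where $u>\tfrac12$ with $d=\mathrm{dist}(x_0,F(u))$; strictness is precisely what rules out interior touching via comparison. Second, the paper works from \emph{below} rather than sliding from above: with $w=\psi^+-\tfrac{\sigma}{2}\psi^-$, the already-established smallness $u^-\le\epsilon_0$ is exactly what makes $u\ge w$ hold on $\partial B_{2d}(x_0)$, and then $w$ touches $u$ from below at the nearest free boundary point $y_0$, where $|\nabla\psi(y_0)|>G\big(\tfrac{\sigma}{2}|\nabla\psi(y_0)|\big)$ by the upper bound in \eqref{G2}, contradicting Definition~\ref{def}(1) directly. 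Your ``from above'' version is the $u\mapsto -u$ mirror of this and works the same way, but the chain ``touching forces $a\ge G(b)$, hence $u^+$ has slope $\gtrsim\sigma M_-$, producing large positive values'' is superfluous: once the barrier is set up correctly the contradiction is immediate at the touching point.
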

\begin{proof} Assume first that $0\in F(u)$. After dividing by a constant depending on $M$ and  $\|u\|_{L^\infty(B_1)}$ we can assume that \eqref{G2} holds for all $t > m$, with $m >0$ a sufficiently small constant, and that $\|u\|_{L^\infty(B_1)} \leq 1.$

We wish to prove the following claim.

\smallskip

{\it Claim.} There exists a constant $\delta$ depending on $\sigma,$ such that
\be \label{claim}
 \text{if $\|u\|_{L^\infty(B_1)} \leq 1$ then $\|u\|_{L^\infty(B_\delta)} \leq 1-\delta.$}
\ee
Once the claim is established, we obtain by rescaling that if $\|u\|_{L^\infty(B_1)} \leq 1$ then 
\be \|u\|_{L^\infty(B_r)} \leq r^\alpha, \quad \text{for $r=1, \delta, \delta^2, \ldots...$}
\ee
which yields the desired H\"older continuity. Notice that after the rescaling
$$\tilde u(x)= \frac{u(r x)}{r^\alpha}, \quad x \in B_1,$$
the corresponding function $$\tilde G(t) = r^{1-\alpha} G(r^{\alpha-1}t)$$ giving the free boundary condition for $\tilde u$, will still satisfy $\eqref{G2}$ for $t>m.$
 
To prove the claim, we observe first that $u^+$ and $u^-$ are subsolutions to $a_{ij}u_{ij}=0$ in $B_1$ and $0 \leq u^\pm \leq 1$. Hence, by weak Harnack inequality either of the following happens:
\be \label{weak1}\text{If \quad $\frac{|\{u^- = 0\} \cap B_r|}{|B_r|} \geq \frac 12 \quad $ then \quad $\sup_{B_{r/2}} u^- \leq (1-\eta)osc_{B_r}u^-$}
\ee
\be\label{weak2} \text{If \quad $\frac{|\{u^+ = 0\} \cap B_r|}{|B_r|} \geq \frac 12 $\quad  then \quad $\sup_{B_{r/2}} u^+ \leq (1-\eta) osc_{B_r}u^+$.}
\ee
We apply this alternative to a sequence of radii, $r_k= 2^{-k}, k=0,1,\ldots$ Say that at $k=0$, \eqref{weak1} holds. We distinguish two cases. 

Fix $\bar k$ depending on $\sigma$, to be specified later.

\smallskip

{\it Case 1.} For some $k \leq\bar k$, \eqref{weak2} holds for $r=r_k.$ Then the claim immediately follows.

\smallskip

{\it Case 2.} For all $k \leq \bar k$, \eqref{weak1} holds. Thus,
\begin{equation}\label{weak3} u^- \leq (1-\eta)^{\bar k} \leq \eps_0, \quad u^+ \leq 1 \quad \text{in $B_{r_{\bar k}}$,}\ee
with $\eps_0$ small to be made precise later (and $\bar k$ large enough depending on $\eps_0$). 

We want to show that in this case, 
\be\label{u+} u^+ \leq \frac 1 2 \quad \text{in $B_{r_{\bar k}/4}$.}\ee

Assume by contradiction that there is $x_0 \in B_{r_{\bar k}/4}$ such that $$u(x_0) > \frac 1 2.$$ Let $B_d(x_0)$ be the largest ball around $x_0$ which is contained in $B^+_{1}(u),$ i.e $$d=dist(x_0, F(u))=|x_0-y_0| \leq r_k/4.$$

Then, by Harnack inequality,
\be\label{harnack}u \geq c_0 \quad \text{in $B_{d/2}(x_0)$},\ee
with $c_0$ universal. Let 
\be\label{psi} \psi(x):= \begin{cases}c(|x-x_0|^{-\gamma} - d^{-\gamma}) & \text{if $|x-x_0| \geq d/2$}\\
c_0 & \text{if $|x-x_0| \leq d/2$,}
\end{cases}\ee
with $c$ chosen so that $\psi$ is continuous on $\p B_{d/2}(x_0)$.
Choose $\gamma$ large enough (universal) so that $a_{ij} \psi_{ij} >0$ outside $B_{d/2}(x_0).$ 

Set,
\be\label{w} w:= \psi^+ - \frac \sigma 2 \psi^-\ee
with $\sigma$ the constant in \eqref{G2}  and let $$D= B_{2d}(x_0) \setminus \overline{B_{d/2}(x_0)} \subset B_{r_{\bar k}}.$$

We claim that $u \geq w$ on $D$, if $\eps_0$ is sufficiently small. We will then contradict Definition \ref{def}-(1), if $m$ is chosen small enough. Indeed, $u$ and $w$ touch at $y_0 \in F(u)$ and in view of \eqref{G2} $$|\nabla \psi(y_0)|> G(\frac \sigma 2|\nabla \psi (y_0)|),$$
as long as
$$\frac \sigma 2 |\nabla \psi (y_0)|= \frac \sigma 2 c d^{-\gamma-1} > m.$$

We are left with the proof of our claim. The fact that $u \geq w$ in $\overline{B_{d}(x_0)}$ follows immediately by the maximum principle (see \eqref{harnack}.) Clearly, $u \geq w$ in $\{u\geq 0\} \cap (B_{2d}(x_0) \setminus \overline{B_{d}(x_0)}).$ It remains to show that $u \geq w$ in the set $\{u< 0\} \cap (B_{2d}(x_0) \setminus \overline{B_{d}(x_0)}).$ In order to apply the maximum principle we only need to show that $u \geq w$ on $\p B_{2d}(x_0) \cap \{u<0\}.$ We use that in this set, $u \geq -\eps_0$ by \eqref{weak3}. Hence it is enough to choose $\eps_0$ small enough (depending on $\sigma$), for the desired bound to hold.

\smallskip

Suppose now that $0 \not \in F(u).$ If $B_{1/2} \cap F(u) =\emptyset$ then we use interior estimates for fully nonlinear equations. If there is $x_0 \in B_{1/2} \cap F(u)$, we apply the argument above in $B_{1/2}(x_0)$ and obtain the desired 
H\"older bound in $B_{1/4}(x_0).$ We then combine this bound with interior estimates and a covering argument and obtain the desired claim.
\end{proof}

\begin{rem} If \eqref{G2} is not satisfied then the proof above shows that we can still obtain a uniform modulus of continuity $\omega$ of a viscosity solution $u$, with $\omega$ depending on $G$ and $\|u\|_{L^\infty(B_1)}$.
\end{rem}

Having established Theorem \ref{holder}, we can now prove the key Proposition in the proof of Theorem \ref{TLip}.

\begin{prop}\label{main}  Let $u$ be a viscosity solution to \eqref{fb} in $B_2$ and assume that \eqref{G} holds and $0 \in F(u)$. There exist constants $L_0, \delta,$ (depending on $G$) such that one of the following alternative holds:
\begin{enumerate}
\item $u$ is Lipschitz in $B_\delta$ and $|\nabla u| \leq C \max\{\|u\|_{L^\infty(B_1)},L_0 \}$ in $B_\delta$, with $C$ universal.
\item $\frac 1 \delta \|u\|_{L^\infty(B_{\delta})} \leq \frac 1 2 \|u\|_{L^\infty(B_1)}.$
\end{enumerate}

\end{prop}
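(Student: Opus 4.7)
The plan is a compactness/blow-up argument by contradiction. Suppose the proposition fails: for sequences $\delta_k \to 0$, $L_{0,k} \to \infty$, and $C_k \to \infty$, one produces solutions $u_k$ of \eqref{fb} in $B_2$ with $0 \in F(u_k)$ violating both alternatives. Writing $M_k := \|u_k\|_{L^\infty(B_1)}$, there exist interior points $y_k \in B_{\delta_k}$ with $N_k := |\nabla u_k(y_k)| \ge C_k \max\{M_k, L_{0,k}\} \to \infty$, while $\|u_k\|_{L^\infty(B_{\delta_k})} > \delta_k M_k/2$. The heuristic is that a very large gradient at $y_k$ pushes the free boundary condition into the regime of \eqref{G} where $G'(t)\to 1$; after rescaling, the blow-up limit should satisfy the no-jump condition $v_\nu^+ = v_\nu^-$ and thus behave like a solution of a single fully nonlinear equation, enjoying interior $C^{1,\alpha}$ regularity and contradicting the unboundedness of the gradient.

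First, I would use classical interior $C^{1,\alpha}$ estimates for $\mathcal F$ inside $B^\pm(u_k)$ to bound $\mathrm{dist}(y_k, F(u_k)) \le CM_k/N_k$ and pick $x_k \in F(u_k)$ at this distance. Then I rescale
$$v_k(x) := \frac{u_k(x_k + \rho_k x)}{M_k}, \qquad \rho_k := \frac{M_k}{N_k} \to 0,$$
so that $v_k$ solves a free boundary problem for some $\mathcal F_k \in \mathcal E(\lambda,\Lambda)$ with rescaled free boundary function $\tilde G_k(t) := N_k^{-1} G(N_k t)$. By \eqref{G}, $\tilde G_k'(t) = G'(N_k t) \to 1$ locally uniformly on $(0,\infty)$, so $\tilde G_k \to \mathrm{id}$. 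By construction $0 \in F(v_k)$ and $|\nabla v_k(z_k)| = 1$ at the point $z_k := (y_k - x_k)/\rho_k$, which lies in a universally bounded ball.

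Applying Theorem \ref{holder} to $u_k$ and using $v_k(0) = 0$ then yields
$$\|v_k\|_{L^\infty(B_1)} \le C \rho_k^\alpha \to 0.$$
On the other hand, since $\tilde G_k \to \mathrm{id}$, one expects uniform interior $C^{1,\alpha}$ estimates for $v_k$ across the free boundary at scale one, giving $\|\nabla v_k\|_{L^\infty(B_{1/2})} \le C \|v_k\|_{L^\infty(B_1)} \to 0$, which contradicts $|\nabla v_k(z_k)| = 1$.

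The main obstacle is this last uniform $C^{1,\alpha}$ estimate for $v_k$, which must be stable under the perturbation $\tilde G_k \to \mathrm{id}$. I would rely on the flatness regularity theorem of Section 3 applied to the renormalization $w_k := v_k/\|v_k\|_{L^\infty(B_1)}$, whose effective free boundary function is even closer to the identity (the additional factor $\|v_k\|_{L^\infty(B_1)} \to 0$ only amplifies the scale entering $G$), so that the $w_k$ are close to linear ``two-plane'' solutions and hence uniformly $C^{1,\alpha}$. Alternatively, passing to the limit directly, any subsequential limit of the normalized $w_k$ satisfies $\mathcal F(D^2 w) = 0$ as a viscosity solution across its (collapsing) free boundary---because the limit free boundary condition forces $w$ to be $C^1$ across $F(w)$---and is $C^{1,\alpha}$ by standard interior estimates for fully nonlinear elliptic equations.
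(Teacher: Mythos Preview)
Your heuristic matches the paper's---once the gradients are large, the free boundary condition becomes a no-jump condition and the limit solves a single fully nonlinear equation---but your implementation diverges from the paper's and leaves a real gap at the end.

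The paper does \emph{not} blow up spatially at a point of large gradient. It simply fixes $\delta$ (to be chosen later), assumes $L_k\to\infty$, sets $C_k:=\max\{\|u_k\|_{L^\infty(B_1)},L_k\}$ and rescales only in the range, $\tilde u_k:=u_k/C_k$. H\"older continuity (Theorem~\ref{holder}) gives $\tilde u_k\to u^*$; the key technical step is a separate compactness lemma (Lemma~\ref{compact}) showing that when $G_k\to\mathrm{id}$ the limit satisfies $\mathcal F^*(D^2u^*)=0$ in \emph{all} of $B_1$, across the old free boundary. This lemma is not automatic: it requires a viscosity touching argument with comparison functions of the form $(1+\eps)P^+-P^-$, with a second case when the touching polynomial has vanishing gradient. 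You invoke exactly this conclusion in your ``alternative'' paragraph but do not prove it; it is the heart of the matter.

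Once $u^*$ is known to solve the equation in $B_1$, the paper uses the interior $C^{1,\alpha}$ estimate to write $u^*\approx a\cdot x$ and splits into two cases: if $|a|\le 1/4$ one directly contradicts the failure of~(ii); if $|a|>1/4$ then $\tilde u_k$ is $\eps$-close to a nondegenerate two-plane solution $U_\beta$ with $\beta\sim|a|$, and \emph{only then} the flatness theorem (Theorem~\ref{flatness}) applies to give the Lipschitz bound contradicting failure of~(i). This dichotomy is absent from your argument, and it is why both alternatives appear in the statement.

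Your direct application of Theorem~\ref{flatness} to $w_k:=v_k/\|v_k\|_{L^\infty(B_1)}$ is not justified: you have no information that $w_k$ is close to a two-plane profile $U_\beta$ with $\beta$ bounded away from $0$, which is exactly the hypothesis of that theorem. Moreover, your claim that the extra factor $s_k:=\|v_k\|_{L^\infty(B_1)}\to 0$ ``only amplifies the scale entering $G$'' is incorrect: the effective free boundary function for $w_k$ is $t\mapsto (N_k s_k)^{-1}G(N_k s_k t)$, and since $s_k\to 0$ while $N_k\to\infty$ you have no a priori control on $N_k s_k$; it need not tend to infinity, so the rescaled $G$ need not approach the identity. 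Finally, your contradiction route never uses the failure of~(ii); in the paper, failure of~(ii) is precisely what rules out the small-slope case $|a|\le 1/4$.
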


We first need the following compactness lemma.

\begin{lem}\label{compact} Let $u_k$ be a viscosity solution to \begin{equation}
\left\{
\begin{array}{ll}
\mathcal{F}_k(D^{2}u_k)=0, & \hbox{in $B_1^+(u_k) \cup B_1^-(u_k),$} \\
\  &  \\
(u_k)_{\nu }^{+}=G_k((u_k)_{\nu }^{-}), &
\hbox{on $F_k(u_k)$}, \\
&
\end{array}
\right.  \label{fbk}
\end{equation} with $\mathcal{F}_k \in \mathcal E(\lambda, \Lambda)$ and $G_k$ satisfying \eqref{G_e}. Assume that the following convergences hold uniformly on compacts
\begin{eqnarray} \label{F_k} &\mathcal F_k \to \mathcal F^*, \\
\label{G_k}  & G_k \to  G^*, \quad \text{$G^*(t)=t$}\\
\label{u_k} & u_k \to u^*.
 \end{eqnarray}
Then 
\be\label{full} \mathcal F^*(D^2 u^*) = 0 \quad \text{in $B_1$}.\ee
\end{lem}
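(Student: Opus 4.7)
The plan is to show $u^*$ is a viscosity solution of $\mathcal{F}^*(D^2 u^*)=0$ in all of $B_1$, with the free boundary of $u^*$ becoming ``transparent'' in the limit because $G^*(t)=t$ degenerates the free boundary condition to a no-jump condition on the gradient. The argument splits into two parts: the interior (standard viscosity stability) and the free boundary (the main case), which I will handle via a Hopf-type comparison.

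At $x_0 \in B_1 \setminus F(u^*)$, say $u^*(x_0) > 0$, uniform convergence $u_k \to u^*$ gives $u_k > 0$ on a fixed neighborhood of $x_0$ for large $k$. A $C^2$ test function touching $u^*$ at $x_0$ can be perturbed to touch $u_k$ at a nearby $y_k \to x_0$ lying in the interior of $B_1^+(u_k)$; the equation $\mathcal{F}_k(D^2 u_k) = 0$ in the phase passes to the limit via $\mathcal{F}_k \to \mathcal{F}^*$, yielding $\mathcal{F}^*(D^2 u^*)=0$ at $x_0$ in the viscosity sense.

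At a free boundary point $x_0 \in F(u^*)$, suppose by contradiction that $\varphi \in C^2$ touches $u^*$ from above at $x_0$ (i.e., $\varphi \geq u^*$ near $x_0$, $\varphi(x_0)=0$) with $\mathcal{F}^*(D^2\varphi(x_0)) < -\delta < 0$. After subtracting a small quadratic, the touching is strict with quadratic gap and $\mathcal{F}^*(D^2\varphi) < -\delta$ in a ball $B_r(x_0)$. The standard touching argument using $u_k \to u^*$ gives $y_k \to x_0$, $c_k \to 0$ with $\varphi + c_k \geq u_k$ in $B_r(x_0)$ and equality at $y_k$; for $k$ large, $\mathcal{F}_k(D^2\varphi) < -\delta/2$ in $B_r(x_0)$. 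If $y_k$ lies in the interior of a phase of $u_k$ infinitely often, the previous paragraph's argument yields a direct contradiction. Otherwise $y_k \in F(u_k)$ for all large $k$. Assume the nondegenerate case $p := \nabla\varphi(x_0) \neq 0$ and set $p_k := \nabla\varphi(y_k) \to p$, $\nu_k := p_k/|p_k|$. On $B_1^+(u_k)$ near $y_k$, the nonnegative function $w_k := \varphi + c_k - u_k$ is $C^2$ and satisfies, by the Pucci inequality,
\[ \mathcal{M}^-_{\lambda,\Lambda}(D^2 w_k) \leq \mathcal{F}_k(D^2\varphi) - \mathcal{F}_k(D^2 u_k) < -\delta/2, \]
with $w_k(y_k) = 0$. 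A Hopf boundary-point lemma at $y_k$ gives $(u_k)^+_{\nu_k} \leq |p_k| - c_0$ for some $c_0 > 0$; the symmetric argument on $B_1^-(u_k)$ yields $(u_k)^-_{\nu_k} \geq |p_k| + c_0$. Using the free boundary condition $(u_k)^+_{\nu_k} = G_k((u_k)^-_{\nu_k})$, monotonicity of $G_k$, and $G_k \to \mathrm{id}$ uniformly on compacts,
\[ |p_k| - c_0 \geq (u_k)^+_{\nu_k} = G_k\bigl((u_k)^-_{\nu_k}\bigr) \geq G_k(|p_k| + c_0) \longrightarrow |p_k| + c_0, \]
a contradiction for $k$ large.

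The main obstacle is the rigorous application of Hopf at $y_k \in F(u_k)$, since $F(u_k)$ may fail the interior sphere condition a priori. This is overcome by exploiting the inclusion $B_1^+(u_k) \subset \{\varphi + c_k > 0\}$ (from $\varphi + c_k \geq u_k$) together with the $C^2$ smoothness of $\{\varphi + c_k = 0\}$ at $y_k$ (from $\nabla\varphi(y_k) \neq 0$): this provides a tangent ball from within a smooth domain containing the positive phase, which can be used to build an explicit barrier in the Pucci setting and recover the boundary gradient estimate. The degenerate case $\nabla\varphi(x_0) = 0$ is handled separately, using the indefinite Hessian structure of the strict classical supersolution $\varphi$ at $x_0$ and the fact that $u^*$ must change sign in every neighborhood of $x_0 \in F(u^*)$.
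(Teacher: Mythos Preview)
Your overall plan is sound and the interior case is standard, but the free boundary case has two related gaps.

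First, the positive-phase Hopf step is not justified. The inclusion you invoke, $B_1^+(u_k)\subset\{\varphi+c_k>0\}$, produces an interior tangent ball to the \emph{containing} set $\{\varphi+c_k>0\}$ at $y_k$, not to $B_1^+(u_k)$; that ball need not lie in $B_1^+(u_k)$, so it cannot seed a barrier for $w_k$ in the positive phase. (The inclusion does go the right way on the negative side: $\{\varphi+c_k<0\}\subset B_1^-(u_k)$ gives a genuine interior ball there, so only that half of your estimate survives.) Second, and more fundamental, the free boundary condition here is the viscosity comparison condition of Definition~\ref{def}, not a pointwise identity; the quantities $(u_k)^\pm_{\nu_k}$ are not classically defined at $y_k\in F(u_k)$, so the chain
\[
|p_k|-c_0\ \ge\ (u_k)^+_{\nu_k}\ =\ G_k\bigl((u_k)^-_{\nu_k}\bigr)\ \ge\ G_k(|p_k|+c_0)
\]
cannot be asserted as written. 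To reach a contradiction you must actually exhibit a test function $a\psi^+-b\psi^-\ge u_k$ with $a<G_k(b)$.

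This is exactly what the paper does, bypassing Hopf entirely. Working (symmetrically) with a quadratic $P$ touching $u^*$ strictly from below with $\mathcal F^*(D^2P)>0$ and $\nabla P(x^*)\ne 0$, it sets $\psi:=(1+\eps)P^+-P^-$ and slides $\psi_t(x)=\psi(x+te_n)$ until it first touches some $u_k$; the contact must occur on $F(u_k)$, and there $(1+\eps)|\nabla P|>G_k(|\nabla P|)$ since $G_k\to\mathrm{id}$, contradicting Definition~\ref{def}(1) directly. For the degenerate case $\nabla P(x^*)=0$ the paper translates $P$ down and then shifts it by $\eps\tau$ in every direction $\tau$; each slid copy must touch $u^*$ where $u^*=0$ and the gradient vanishes, forcing $u^*\equiv 0$ near $x^*$ and again a contradiction. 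Your ``indefinite Hessian'' remark does not make this step precise.
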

\begin{proof} It is standard to obtain that (see Proposition 2.9 in \cite{CC}.)
\be\label{int}\mathcal F^*(D^2 u^*) = 0 \quad \text{in $B_1 \cap \{u^*\neq 0\}$}.\ee

We next verify that the equation holds also across $\{u^*=0\}.$ Precisely, 
we need to show that if $P$ is a quadratic polynomial with $\mathcal F^*(D^2 P) >0,$ then $P$ cannot touch $u^*$ strictly by below at a point $x^* $ where $u^*(x^*)=0.$ Assume by contradiction that such a point exists.

We distinguish two cases.

\smallskip

{\it Case 1.} $\nabla P(x^*) \neq 0.$ Say for simplicity, $\nabla P(x^*)= \gamma e_n, \gamma >0.$

Set,
$$\ \psi:=(1+\eps) P^+ - P^-.$$
For $\eps$ small enough, $\psi$ still separates strictly from $u^*$ on the boundary of a small neighborhood of $x^*$, say $\p B_\rho$, and coincides with it at $x^*$.
Let $$\psi_t(x)= \psi(x+ t e_n), \quad x \in B_\rho.$$ Then for $t=-C \eps$, $C$ large, we have that $\psi_t$ is strictly below all $u_k$'s with $k$ large enough (see \eqref{u_k}).  We increase $t$ till a small $c_0 >0$ to guarantee that $\psi_t$ crosses $u^*$ and hence all the $u_k$'s with $k$ large. Thus $\psi_t$ must touch the $u_k$'s for the first time, say at $t=t_k$ small. Since the separation of $\psi$ and $u^*$ on $\p B_\rho$ is strict, the first touching point $x_k$ cannot occur there (if $c_0$ is small depending on the fix separation.)
Since $\mathcal F^*(D^2P)>0$  and $\mathcal F_k$ tends to $\mathcal F^*$ uniformly, we conclude that $x_k \in F(u_k).$ However, in view of \eqref{G_k}, $$(1+\eps)|\nabla P(x_k+ t_k e_n)| > G_k(|\nabla P(x_k+ t_k e_n)|)$$ and we contradict Definition \ref{def}-(1) for $u_k.$

\smallskip

{\it Case 2.} $\nabla P(x^*) = 0.$ Without loss of generality we can assume that $\nabla P$ vanishes only at one point, say at $x^*=0$.  Since $P$ touches $u^*$ strictly by below at $0$, $P$ and $u^*$ separate a fixed amount $s$ on the boundary of a small neighborhood $B_\rho$ of $0$. 

We translate the graph of $P$ by $t \leq \bar t $ in the negative $e_{n+1}$ direction and call $P_t=P-t$. Here $\bar t > 0$ depends on the separation $s$ of $P$ and $u^*$. 

Given a direction  $\tau$ in $\R^n$, let $$Q^t_{\tau,\eps}(x)=P_t(x+\eps \tau).$$
We choose $\eps$ small enough so that $Q^{\bar t}_{\tau, \eps}$ is strictly below $u^*$ in $B_\rho$, and all $Q^t_{\tau, \eps}$ with say $-2\bar t \leq t \leq \bar t$ separate strictly from $u^*$ on $\p B_\rho.$

We slide  $Q^t_{\tau,\eps}(x)$ till it touches $u^*$ for the first time. By the strict separation, the first touching point cannot occur on $\p B_\rho$. By \eqref{int} and the argument in Case 1, we conclude that the touching point occurs where $u^*=0$ and $\nabla Q^t_{\tau,\eps}$ vanishes. Since this holds for all $\eps$ small and all directions $\tau$ we conclude that $u^*\equiv 0$ in a neighborhood of 0, and contradict that $\mathcal F^*(D^2P) >0.$
\end{proof}

We can now easily deduce Proposition \ref{main}.

\smallskip

{\it Proof of Proposition $\ref{main}$.} Let $\delta$ be fixed, to be specified later. Assume by contradiction that there exist a sequence of constants $L_k \to \infty$, and a sequence of operators $\mathcal F_k$ and solutions $u_k$ to \eqref{fb2} such that
 $u_k$ does not satisfy neither option $(i)$ nor $(ii).$
Call, $$\max\{\|u_k\|_\infty, L_k\} =C_k$$ and let 
$$\tilde u_k = \frac{u_k}{C_k}, \quad \tilde {{\mathcal F}_k}(M):= \frac{1}{C_k} \mathcal F_k(C_k M), \quad \tilde G_k:= \frac{G(C_k t)}{C_k}.$$ Then by Theorem \ref{holder}, the uniform ellipticity of the $\mathcal F_k$'s and the first assumption in \eqref{G} on $G,$ we conclude that (up to extracting subsequences), 
\begin{eqnarray*}  & \tilde{{\mathcal F}_k} \to \mathcal F^*, \\
  & \tilde G_k \to  G^*, \quad \text{$G^*(t)=t$}\\
& \tilde u_k \to u^*,
 \end{eqnarray*} uniformly on compacts.
 
Then, by the compactness result Lemma \ref{compact} we obtain that  
\be\label{full*} \mathcal F^*(D^2 u^*) = 0 \quad \text{in $B_1$}.\ee

Hence by $C^{1,\alpha}$ estimates (see \cite{CC}) we get that 
\be\label{c1}|u^* - l| \leq C r^{1+\alpha}, \quad \forall r \leq 1,\ee
where $l(x)=a \cdot x$ for a vector $a \in \R^n$ with $|a| \leq C$ universal.

We distinguish two cases. 

\smallskip

{\it Case 1.} $|a| \leq \frac 1 4.$

In this case, clearly \eqref{c1} implies that $$\frac {1}{\delta}|u^*| \leq \frac 1 4 + C\delta^\alpha \leq \frac 1 3 \quad \text{in $B_\delta$.}$$
Thus all $u_k$'s with $k$ large satisfy $(ii)$, a contradiction.

\smallskip

{\it Case 2.} $|a| > \frac 1 4$

In this case we will use the flatness result of \cite{DFS}, which we restate in Section 3 (see Theorem \ref{flatness}).

Using that $\tilde u_k$ converges uniformly to $u^*$ and \eqref{c1} holds, we have that
\be\label{c1k}|\tilde u_k - a\cdot x| \leq 4\delta^{1+\alpha}, \quad \text{in $B_{2\delta}$}.\ee
Set, 
$$\beta_k:=|a|, \quad \alpha_k = \tilde G_k(|a|), \quad \omega:= \frac{a}{|a|}.$$
Then, \eqref{c1k} together with the fact that $\tilde G_k$ converges uniformly to the identity on compacts, yield
$$|\tilde u_k - U_{\beta_k}| \leq 5\delta^{1+\alpha}, \quad \text{in $B_{2\delta}$}$$
with
$$U_{\beta_k}:= \alpha_k(x \cdot \omega)^+ - \beta_k(x \cdot \omega)^-.$$

We conclude from Theorem \ref{flatness} in the next section that $F(\tilde u_k)$ is $C^{1,\gamma}$ in $B_{\delta}$, with universal bound independent of $k$, as long as $\delta$ is small universal. We notice that in order to apply Theorem \ref{flatness} we need to use the second relation in assumption \eqref{G} to guarantee that $\tilde G'_k$ has a universal Lipschitz modulus of continuity away from the origin.

Moreover $\tilde u_k$ is $C^1$ up to the free boundary from either side. In particular the $\tilde u_k$'s are Lipschitz with universal bound hence
$$|\nabla u_k| \leq C C_k \quad \text{in $B_\delta$}.$$
This contradicts the fact that the $u_k$'s do not satisfy $(i).$

\qed

Finally, the proof of  our main result easily follows. \smallskip

{\it Proof of Theorem $\ref{TLip}$.} In what follows $\delta, C, L_0$ are the universal constants in Proposition \ref{main}.

Say $0 \in F(u)$ and call $L:= \max\{\|u\|_{L^\infty(B_{3/4})},L_0\}.$ Set
$$a(r):=\frac 1 r \|u\|_{L^\infty(B_r)}, \quad r \leq 3/4$$
and let us show that
\be\label{claim1} a(\delta^k) \leq C L, \quad \forall k\geq 1.\ee

By Proposition \ref{main} either alternative $(i)$ or $(ii)$ holds. In the first case, $u$ is Lipschitz in $B_\delta$ and 
$$|\nabla u| \leq C L \quad \text{in $B_{\delta}$}$$
hence our claim is clearly satisfied for all $k \geq1.$ 

If $(ii)$ holds, then 
$$ a(\delta) \leq \frac 1 2 \|u\|_{L^\infty(B_{3/4})} \leq L.$$
We now rescale and iterate. Call
$$u_k(x):= \frac{u(\delta^k x)}{\delta^k}, \quad k\geq 1, \quad x \in B_1.$$
Notice that $G$ remains invariant under this rescaling, hence the $u_k$'s satisfy the conclusion of Proposition \ref{main}.

If the $u_k$'s satisfy indefinitely the second alternative $(ii)$ of Proposition \ref{main}, then 
\be\label{claim2}a(\delta^k) \leq L, \quad \forall k\geq 1,\ee
as desired.

If  $\bar k \geq 1$ is the smallest $k$ for which $u_k$ does not satisfy $(ii)$, then \eqref{claim2} holds for all $1 \leq k \leq \bar k$ (hence so does \eqref{claim1}) and 
$u_{\bar k}$ satisfies the first alternative $(i)$ of Proposition \ref{main}. This means that $u_{\bar k}$ is Lipschitz in $B_{\delta},$ with
$$|\nabla u_{\bar k}| \leq C \max\{\|u_{\bar k}\|_{L^\infty(B_{3/4})}, L_0\} \quad \text{in $B_{\delta}.$}$$
Thus, using \eqref{claim2} for $k=\bar k$,
$$|\nabla u_{\bar k}| \leq C \max\{\frac{1}{\delta}\|u_{\bar k-1}\|_{L^\infty(B_\delta)}, L_0\} \leq CL \quad \text{in $B_\delta$,}$$
from which we deduce that \eqref{claim1} holds also for all $k \geq \bar k +1.$

Having established \eqref{claim1}, it is immediate that
$$|u(x)| \leq \frac{CL}{\delta} dist(x, F(u)), \quad \forall x \in B_{3/4}.$$
From this the Lipschitz continuity of $u$ in $B_{1/2}$ easily follows.

\section{Flatness results and generalizations}

\subsection{Flatness result.} In this subsection, we state a flatness Theorem that is implicitly contained in the work \cite{DFS}, although it is not explicitly formulated in this precise form. Precisely, denote by
$$U_\beta(t)= \alpha t^+ - \beta t^-, \quad \alpha=G(\beta), \quad \beta \geq 0,$$
a so-called two-plane function. The following result holds.

\begin{thm}\label{flatness} Let $u$ be a viscosity solution to \eqref{fb}-\eqref{G_e} satisfying \be\label{flat} |u(x)-U_\beta(x)| \leq \eps \quad  \text{in $B_1$}, \quad  0 < \beta_0 \leq \beta \leq \beta_1.\ee There exists a universal constant $\bar \eps$ such that if $\eps \leq \bar \eps$
then $F(u)$ is $C^{1, \gamma}$ in $B_{1/2}$, for a small universal $\gamma,$ and the $C^{1,\gamma}$ norm of $F(u)$ is bounded by a universal constant.
\end{thm}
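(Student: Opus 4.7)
The plan is to prove Theorem \ref{flatness} via the improvement-of-flatness scheme of De Silva, Ferrari and Salsa \cite{DFS}, iterated at geometric scales. The key step is an \emph{improvement-of-flatness lemma}: there exist universal constants $\rho\in(0,1)$, $C>0$ and $\bar\eps>0$ (depending on $\beta_0,\beta_1$, ellipticity and the $C^{1,1}$-modulus of $G$ on $[\beta_0/2,2\beta_1]$) such that if
$$|u(x)-U_\beta(x\cdot e_n)|\le \eps\le \bar\eps \quad \text{in } B_1,\qquad \beta_0\le\beta\le\beta_1,$$
and $0\in F(u)$, then there exist a new direction $\nu'$ and slope $\beta'$ with $|\nu'-e_n|+|\beta'-\beta|\le C\eps$ and
$$|u(x)-U_{\beta'}(x\cdot \nu')|\le \frac{\rho\eps}{2}\quad \text{in }B_\rho.$$
Applying this lemma to the rescalings $u_k(x)=\rho^{-k}u(\rho^k x)$ produces a Cauchy sequence of directions $\nu_k\to\nu_\infty$ at a geometric rate, which gives $C^{1,\gamma}$ regularity at $0$ with $\gamma=\log_\rho(1/2)$. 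Since the flatness hypothesis is stable under translation of the origin along $F(u)$, the estimate is uniform on $B_{1/2}$.

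To obtain the improvement-of-flatness lemma, I first would prove a \emph{partial Harnack inequality}: if $U_\beta(x_n-\eps)\le u(x)\le U_\beta(x_n+\eps)$ in $B_1$, then in $B_{1/2}$ the strip can be shrunk to $(1-\eta)\eps$ after a translation of order $\eps$ in the $e_n$ direction. This is achieved by constructing radial perturbations of $U_\beta$ as sub/supersolutions in each phase, applying the interior Harnack inequality to the differences $u-\alpha x_n$ on $\{u>0\}$ and $u-\beta x_n$ on $\{u<0\}$, and using Definition \ref{def} with $\alpha=G(\beta)$ at the free boundary. Iterating this partial Harnack at dyadic scales yields a uniform H\"older modulus for the normalized difference
$$\tilde u_\eps(x)=\begin{cases} (u(x)-\alpha x_n)/\eps & \text{in } B_1\cap\{u>0\},\\ (u(x)-\beta x_n)/\eps & \text{in } B_1\cap\{u<0\},\end{cases}$$
extended continuously across $F(u)$ by taking the same values on the two sides of the approximate hyperplane $\{x_n=0\}$.

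Now I would argue by contradiction and compactness. Taking $\eps_k\to 0$ and a sequence $u_k$ with $\beta_k\to\beta_*\in[\beta_0,\beta_1]$ violating the lemma, the previous step plus Arzel\`a--Ascoli yields $\tilde u_{\eps_k}\to\tilde u$ locally uniformly in $\overline{B_1}$. A passage-to-the-limit argument in the spirit of Lemma \ref{compact} shows that $\tilde u$ solves the \emph{linearized two-phase transmission problem}
$$a^*_{ij}\partial_{ij}\tilde u=0 \quad \text{in } B_1\setminus\{x_n=0\},\qquad \alpha_*\tilde u_n^+=G'(\beta_*)\beta_*\tilde u_n^- \quad \text{on }\{x_n=0\},$$
where $a^*_{ij}$ is the linearization of $\mathcal F$ at $D^2U_\beta$ and $\alpha_*=G(\beta_*)$. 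The transmission condition emerges from pushing tilted two-plane comparison functions $U_{\beta+\eps a}(x\cdot(e_n+\eps b))$ through Definition \ref{def} for $u_k$, using the expansion $G(\beta+\eps a)=G(\beta)+\eps G'(\beta)a+O(\eps^2)$ where the $O(\eps^2)$ remainder is controlled by \eqref{G}. Standard $C^{1,\alpha}$ estimates for this linear transmission problem (uniform over $\beta_*\in[\beta_0,\beta_1]$) give an affine approximation of $\tilde u$ within $C\rho^{1+\alpha}$ in $B_\rho$, which decodes back to the desired improvement of flatness for large $k$, a contradiction.

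The main obstacle is precisely this last compactness step: one has to verify that the limit $\tilde u$ satisfies the linearized transmission condition in the viscosity sense. This requires sliding families of tilted two-plane barriers whose slopes and normals differ from $(\alpha_*,\beta_*,e_n)$ by amounts of order $\eps$, and uniformly controlling the $G$-dependent mismatch on the free boundary via $G''=O(1/t)$ from \eqref{G}. The bounds $\beta_0\le\beta\le\beta_1$ are crucial to keep $G'(\beta)$ bounded and bounded away from zero, so that the limiting transmission problem is uniformly elliptic and its $C^{1,\alpha}$ estimate is genuinely universal.
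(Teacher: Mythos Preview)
Your approach is essentially the same as the paper's: both reduce to iterating the non-degenerate improvement-of-flatness Lemma~5.1 of \cite{DFS} at geometric scales, after converting \eqref{flat} into the trapped form $U_\beta(x_n-C\eps)\le u\le U_\beta(x_n+C\eps)$. The only difference is that the paper cites that lemma as a black box while you sketch its proof via partial Harnack and linearization; note however that for Theorem~\ref{flatness} you need only the modulus of continuity of $G'$ on $[\beta_0/2,2\beta_1]$, not the asymptotic bound $G''=O(1/t)$ from \eqref{G}, since $\beta$ stays in a fixed compact interval throughout the iteration.
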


Here a constant is called universal if it depends only on $n, \lambda, \Lambda, \beta_0, \beta_1,$ and the modulus of continuity 
of $G'$ on $[\beta_0/2, 2\beta_1].$

First notice that assumption \eqref{flat} implies that ($C$ universal)
\be\label{newflat}U_\beta(x_n - C\eps e_n) \leq u(x) \leq U_\beta(x_n+C\eps e_n), \quad \text{in $B_1$}.\ee
Thus, since $\beta \geq \beta_0>0$, one can apply Lemma 5.1 in \cite{DFS} (non-degenerate improvement of flatness) indefinitely and obtain the desired result (as long as $\bar \eps$ is small universal.)
For the reader convenience, we report Lemma 5.1 in \cite{DFS} below.

\smallskip

{\bf Lemma 5.1. \cite{DFS} }{\it Let $u$  satisfy
\begin{equation}\label{flat_1}U_\beta(x_n -\eps) \leq u(x) \leq U_\beta(x_n +
\eps) \quad \text{in $B_1,$} \quad 0\in F(u),
\end{equation} with $0 <  \beta \leq L.$

If $0<r \leq r_0$ for $r_0$ universal, and $0<\eps \leq \eps_0$ for some $\eps_0$
depending on $r$, then

\begin{equation}\label{improvedflat_2_new}U_{\beta'}(x \cdot \nu_1 -r\frac{\eps}{2})  \leq u(x) \leq
U_{\beta'}(x \cdot \nu_1 +r\frac{\eps }{2}) \quad \text{in $B_r,$}
\end{equation} with $|\nu_1|=1,$ $ |\nu_1 - e_n| \leq \tilde C\eps$ , and $|\beta -\beta'| \leq \tilde C\beta \eps$ for a
universal constant $\tilde C.$}

\smallskip

 We notice that in the Lemma above $\beta \leq L,$ where in \cite{DFS} $L$ denotes the Lipschitz constant of $u$. However the Lipschitz continuity of $u$ is not used in the proof, while it is only needed to have an upper bound for $\beta$ (and all universal constants will depend on such upper bound.) 
 
 In \cite{DFS} the Lipschitz continuity of $u$ is used to guarantee that if one only assumes that the free boundary is flat (see Theorem 1.1 \cite{DFS}), then an appropriate rescale of $u$ will satisfy the assumption \eqref{flat_1}. In the case of Theorem \ref{flatness} above, we are already guaranteed that $u$ falls in the non-degenerate setting of Lemma 5.1, hence we only need to require that $\beta \leq \beta_1.$

\subsection{Extensions and final remarks.}  First, we remark that our theorem can be proved under milder assumptions and that the dependence on $G$ of the Lipschitz constant can be removed. Precisely, we have the following.

\begin{thm}\label{TLip_mild}Let $u$ be a viscosity solution to \eqref{fb} and assume that
\be\label{newG} |G'(b) -1| \leq \delta \quad \text{on $[M, \infty)$}\ee for a small universal $\delta=\delta(n,\lambda, \Lambda)$ and a positive constant $M$. Then $u$ is Lipschitz in $B_{1/2}$ with 
$$\|\nabla u \|_{L^\infty(B_{1/2})} \leq C(n, \lambda, \Lambda, M) (\|u\|_{L^\infty(B_1)}+1).$$
\end{thm}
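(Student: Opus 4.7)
The plan is to retrace the proof of Theorem \ref{TLip}, paying careful attention to the dependence of constants on $G$. First observe that Theorem \ref{holder} remains applicable: assumption \eqref{newG} with $\delta \le 1/2$ gives $\sigma^{-1}t \ge G(t) \ge \sigma t$ on $[M,\infty)$ with $\sigma = 1/2$, so the H\"older estimate for $u$ holds with constants depending only on $n,\lambda,\Lambda,M$.

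The heart of the argument is to reprove Proposition \ref{main} with constants depending only on $n,\lambda,\Lambda,M$ (for a universal $\delta$), by the same contradiction/compactness scheme. Take $L_k \to \infty$ and solutions $u_k$ whose $G_k$ all satisfy \eqref{newG} with the same $\delta,M$, violating both alternatives, and normalize $\tilde u_k = u_k/C_k$ with $C_k = \max(\|u_k\|_\infty,L_k) \to \infty$. Then $\tilde G_k(t) = G_k(C_k t)/C_k$ satisfies $\tilde G_k'(t) = G_k'(C_k t) \in [1-\delta, 1+\delta]$ on $[M/C_k,\infty)$. Since $M/C_k \to 0$, passing to a subsequence gives $\tilde G_k \to G^*$ locally uniformly on $(0,\infty)$ with $|(G^*)' - 1| \le \delta$, together with $\tilde u_k \to u^*$ and $\tilde{\mathcal F}_k \to \mathcal F^*$. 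The argument of Lemma \ref{compact} still shows $u^*$ solves the free boundary problem associated with $(\mathcal F^*, G^*)$.

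The essential difference from Theorem \ref{TLip} is that $G^*$ need no longer equal the identity, so $u^*$ is not harmonic across $F(u^*)$ and interior $C^{1,\alpha}$ estimates for $u^*$ in all of $B_1$ are unavailable. I would overcome this with a second compactness argument yielding a stability lemma: for every $\eta > 0$ there is $\delta(\eta) > 0$ such that any $v$ with $\|v\|_\infty \le 1$ solving the free boundary problem in $B_1$ for some $\mathcal F \in \mathcal E(\lambda,\Lambda)$ and some $G$ with $|G'-1| \le \delta(\eta)$ is $\eta$-close in $L^\infty(B_{1/2})$ to some $\mathcal F$-harmonic function on $B_{1/2}$. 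The proof is by contradiction: a sequence $\delta_j \to 0$ violating closeness would, by H\"older compactness and Lemma \ref{compact} applied with $G^* = \mathrm{id}$, converge to a harmonic limit, contradicting non-closeness.

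Applying this stability to $u^*$ and then interior $C^{1,\alpha}$ to the nearby harmonic $h$ gives $|u^*(x) - a\cdot x| \le Cr^{1+\alpha} + \eta$ in $B_r$ for some $|a|\le C$ universal. Then the dichotomy of the original proof runs: if $|a|\le 1/4$, alternative (ii) transfers to $\tilde u_k$ for large $k$; if $|a|>1/4$, I would apply Theorem \ref{flatness} to $\tilde u_k$ with $\beta_k = |a|$. The crucial point is that after the rescaling $\tilde G_k'$ has modulus of continuity at most $2\delta$ on any fixed compact subinterval of $(0,\infty)$, so the constants in Theorem \ref{flatness} become genuinely independent of the original $G$. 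The resulting Lipschitz bound for $\tilde u_k$ contradicts the failure of alternative (i), and the outer iteration at scales $\delta^k$ closing the proof is identical to that in Theorem \ref{TLip}. The main technical obstacle is the stability lemma, and the smallness of the universal $\delta$ is used precisely there, both to make $\eta$ small and to let it be absorbed into the dichotomy threshold $1/4$.
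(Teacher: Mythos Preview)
Your approach is correct and considerably more explicit than the paper's, which consists of the single remark that the Improvement of Flatness Lemma~5.1 of \cite{DFS} continues to hold once the modulus of continuity $\rho$ of $G'$ merely satisfies $\rho(0^+)\le\delta$ for a small universal $\delta$. That observation is exactly what you invoke in Case~2 when you say that the constants in Theorem~\ref{flatness} become independent of the original $G$; so the core ingredient is the same in both arguments.

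Where you go beyond the paper is in handling the compactness step. The paper does not say how to replace Lemma~\ref{compact} (which genuinely requires $G^*=\mathrm{id}$ to conclude that $u^*$ is $\mathcal F^*$-harmonic in the full ball), whereas you introduce a clean stability lemma: for small $\delta$, solutions of the free boundary problem with $|G'-1|\le\delta$ are $L^\infty$-close to an $\mathcal F$-harmonic function. This is a natural second compactness layer and it works; one only has to be careful that the harmonic comparison $h_j$ is taken with the same operator $\mathcal F_j$ (solve the Dirichlet problem for $\mathcal F_j$ with boundary data $v_j$ and use stability of viscosity solutions to see $h_j\to v^*$). The ordering of constants is also consistent: the strengthened Lemma~5.1 gives a universal $\bar\eps$ once $\rho(0^+)\le\delta_0$, so you may fix the scale $r$, then $\eta$, and finally choose $\delta\le\min(\delta_0,\delta(\eta))$ with no circularity. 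In short, your route and the paper's are compatible; you supply the missing bridge (the stability lemma) between the compactness argument of Proposition~\ref{main} and the strengthened flatness theory that the paper takes for granted.
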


This result can be obtained by a close inspection of the proof of Lemma 5.1. Indeed, it can be shown that the Improvement of Flatness Lemma 5.1 holds only under the assumption that the modulus of continuity $\rho$ of $G'$ satisfies $\rho(0^+) \leq \delta.$

Theorem \ref{TLip} can be generalized to other two-phase problems. For example, it is possible to consider functions $G$ depending also on $(\nu,x)$, with $C^1$ dependence on $\nu$ and Holder dependence on $x$, as long as \eqref{G} holds uniformly in $(\nu, x)$. 

Moreover, we can consider more general operators of the form $\mathcal F(D^2u,\nabla u, u,x)$, which are uniformly elliptic in $D^2u$. The method of Section 2 easily extends to operators which enjoy $C^{1,\alpha}$ (in fact $C^1$) estimates, when appropriately rescaled. Precisely, it is necessary that a blow-down sequence
$$\mathcal F_\eps(M,q, z, x)= \eps \mathcal F(\frac 1 \eps M, \frac 1 \eps q, \frac 1 \eps z, x)$$
admits a uniformly convergent subsequence on compacts with limit $H(M, q, z, x)$ (say for $q\neq 0$) and that solutions to $H(D^2u, \nabla u, u, x)=0$ satisfy interior $C^{1,\alpha}$ estimates. There is a vast literature on the $C^{1,\alpha}$ regularity of fully nonlinear equations. We cite the results of  Caffarelli \cite{C1}, Evans \cite{E}, Krylov \cite{K}, and Trudinger \cite{T1,T2}. 

In the case when $\mathcal F$ depends only on $M$ and $p,$ then it suffices to require that $\mathcal F$ is uniformly elliptic in $M$ for all $p$'s and that
$$|\mathcal F_q(M,q)| \leq C(1+ \frac{\|M\|}{1+|q|}).$$
A concrete example is provided by the equation (see also \cite{GT}, Chapter 15)
$$a_{ij}(\nabla u)u_{ij}=0$$
where the coefficients $a_{ij}$ are uniformly elliptic, Lipschitz in $q$, and satisfy the natural growth assumption:
\be\label{quasi}|\nabla a_{ij}(q)| \leq \frac {C}{|q|}, \quad \text{for $|q|$ large}.\ee

Similarly, one can consider critical points of an energy functional of the form 
$$J(u):= \int_{B_1} (H(\nabla u) + g(x) \chi_{\{u>0\}})dx,$$
for a given $g \in C^{0,\alpha},$ where $H(q)$ satisfies a $p$-growth condition (as in the $p$-Laplace equation),
$$\lambda I \leq D^2H(q) |q|^{p-2} \leq \Lambda I, \quad p>1,$$ see for example \cite{LN1, LN2}.
Then critical points solve a two-phase free boundary problem of the form 
\be\label{div}div(\nabla H(\nabla u))=0, \quad \text{in $B_1^+(u) \cup B_1^-(u),$}\ee
$$\nabla H(u_\nu^+ \nu) \cdot u^+_\nu \nu - \nabla H(u^-_\nu \nu) \cdot u^-_\nu \nu = g(x) \quad \text{on $F(u)$.}$$
By the implicit function theorem, one can check that the free boundary condition can be expressed as
$$u^+_\nu=G(u^-_\nu, \nu,x)$$
with $G(t, \nu, x)$ approaching $t$ at infinity, uniformly in $(\nu, x).$ After dividing \eqref{div} by $|\nabla u|^{p-2}$,
if $H(q)$ satisfies the natural $p$-growth condition  $$D^3 H(q)= O(|q|^{3-p}), \quad \text{as $|q| \to \infty$}, $$ then \eqref{quasi} is satisfied and Theorem \ref{TLip} extends to critical points of $J$.

\end{document}